\numberwithin{equation}{section}
\newtheorem{theorem}[subsection]{Theorem}
\newtheorem{lemma}[subsection]{Lemma}
\newtheorem{proposition}[subsection]{Proposition}
\newtheorem{remark}[subsection]{Remark}
\theoremstyle{definition}
\theoremstyle{remark}
\newcommand{\F}{{\bf F}}
\newcommand{\tr}{\mathop{\rm tr}}
\newcommand{\Tr}{\mathop{\rm Tr}}
\newcommand{\Norm}{{\mathop{\rm N}}}
\newcommand{\LT}{\mathop{\rm LT}}
\newcommand{\barN}{\overline{N}}
\newcommand{\N}{\mathbb{N}}
\newcommand{\A}{\mathcal{A}}
\newcommand{\B}{\mathcal{B}}
\newcommand{\set}[1]{\{#1\}}
\title[Second Main Theorem for $C_2$]{The Second Main Theorem Vector for the modular regular representation of $C_2$.}
\author[Campbell]{H E A Campbell}
\address{Department of Mathematics \& Statistics Department \\
\hfil\break\indent University of New Brunswick \\ Fredericton NB E3B 5A3 \\Canada }
\email{eddy@unb.ca}
\author[Wehlau]{D L Wehlau}
\address{Department of Mathematics and Computer Science \\ \hfil\break\indent
        Royal Military College \\ King\-ston, Ontario, Canada \\ K7K 5L0
       }\email{wehlau@rmc.ca}
\date{\today}
\subjclass{13A50}
\keywords{}
\dedicatory{}
\thanks{This research is supported in part by the Natural Sciences and
Engineering Research Council of Canada. The symbolic computation
language MAGMA (http://magma.maths.usyd.edu.au/) was very helpful.}
\begin{document}
\begin{abstract}
We study the ring of invariants for a finite dimensional representation $V$ of the group $C_2$ of order 2 in characteristic $2$.
Let $\sigma$ denote a generator of $C_2$ and $\{x_1,y_1 \dots, x_m,y_m\}$ a basis of $V^*$.
Then $\sigma(x_i) = x_i$, and $\sigma(y_i) = y_i + x_i$.

To our knowledge, this ring (for any prime $p$) was first studied by David Richman \cite{Richman-vectinvaoverfini:90} in 1990.  He gave a {\em first} main theorem for $(V_2, C_2)$, that is, he proved that the ring of invariants when $p=2$ is generated by
    $$
        \set{x_i, N_i = y_i^2 + x_iy_i, \tr(A) \bigm| 2 \le |A| \le m}\,,
    $$
where $A \subset \set{0,1}^m$, $y^A = y_1^{a_1} y_2^{a_2} \cdots y_m^{a_m}$ and
    $$
        \tr(A) = y^A + (y_1+x_1)^{a_1}(y_2+x_2)^{a_2} \cdots (y_m+x_m)^{a_m}\,.
    $$
In this paper, we prove the {\em second} main theorem for $(V_2, C_2)$, that is, we show that all relations between these generators are generated by relations of type I
    $$
        \sum_{I \subset A } x^I \tr(A-I) = 0\,,
    $$
and of type II
    \begin{align*}
        \tr(A) \tr(B) &= \sum_{L < I} x^{I-L} N^L \tr(I-L+J+K) \\
            &\quad + N^I \sum_{L < J} x^{J-L}\tr(L+K)\,,
    \end{align*}
for all $m$.  We also derive relations of type~III which are simpler and can be used in place of the relations of type~II.
\end{abstract}

\maketitle

\section{Introduction}
 Let $C_p$ be the cyclic group of order $p$ and $V_2$ its unique indecomposable $2$-dimensional representation $V_2$ over a field $\F$ of characteristic $p$.  We take $\set{x,y}$ to be a basis for the hom-dual $V_2^*$ of $V_2$ and we assume the action of $C_p$ on $V_2^*$ to be $\sigma(x) = x$ and $\sigma(y) = y + x$.  Consider the diagonal action of $C_p$ on
    $$
        m\, V_2 = \underbrace{V_2 \oplus V_2 \oplus \cdots \oplus V_2}_m\,.
    $$
and its dual.  Taking a basis for $mV_2^*$ to be $\set{x_1, y_1, \dots, x_m,y_m}$ we obtain $\sigma(x_i) = x_i$ and $\sigma(y_i) = y_i + x_i$.

We are interested in the symmetric algebra of the dual, which we denote $\F[m\, V_2]$ and the corresponding ring of ``vector'' invariants $\F[m\, V_2]^{C_p}$.  This ring of invariants consists of all elements of $\F[m\, V_2]$ fixed point-wise by $\sigma$.
The terminology ``vector invariants'' comes to us from H. Weyl in his book {\em Classical Groups} see \cite{Weyl-clasgrou:97}.
Given a representation of a group $G$ on a vector space over a field $\F$ he refers to theorems explicitly describing
generators for $\F[m\, V]^G$ (where $m\, V = V^{\oplus m}$) for all $m$ as {\em first} main theorems.

David Richman's paper \cite{Richman-vectinvaoverfini:90} in 1990 began the study of the vector invariants of $C_p$ acting on its two-dimensional indecomposable representation $V_2$
in characteristic $p$.  He conjectured that
    \begin{align*}
        &x_i, N_i = y^p - x^{p-1}y_i, u_{ij} = x_iy_j - x_j y_i \text{ and }\\
        & \tr(A) ~\bigm|~0 \le a_i \le p-1  \,,
    \end{align*}
generates the ring of invariants, with a proof in the case $p=2$.  Here $\tr(A)$ denotes the ``trace" (or ``transfer'') of $y^A$, namely
    $$
        \tr(A) = \sum_{\sigma \in C_p} \sigma(y^A) = \sum_{i=0}^{p-1} (y_2 + i x_2)^{a_2}(y_2 + i x_2)^{a_2} \cdots (y_m + i x_m)^{a_m}
    $$
Richman's conjecture was proved by Campbell and Hughes in \cite{Campbell+Hughes-Vectinva:97}. Later, Shank and Wehlau \cite{Shank+Wehlau-Compmoduinva:02a} proved that restricting the traces to have degree larger than $2(p-1)$ gave a \emph{minimal} generating set. Campbell, Shank and Wehlau \cite{Campbell+Shank+Wehlau:10} recently gave a new proof of Richman's conjecture for any $p$, which showed that the minimal algebra generating set just described is also a SAGBI basis.  Finally, Wehlau has given another proof \cite{Wehlau-Shank_Conjecture:12}.  It is well-known that these ring of invariants are not Cohen-Macaulay for $m \ge 3$. It is not hard to show that the number $s$ of such minimal generators is
    $$
      p^m - \binom{m+2p-2}{m} +m\binom{m+p-2}{m} + \binom{m}{2} + 2m\,,
    $$

We form a polynomial ring $Q = \F[\xi_1, \xi_2, \dots, \xi_s]$ for $s$ as above and a surjection of algebras $\pi: Q \to \F[m\,V_2]^{C_p}$ by setting $\pi(\xi_i)$ to be the $i^{\text{th}}$ generator. Elements of the kernel of $\pi$ give rise to relations for the ring of invariants. A theorem giving an explicit generators for $\ker(\pi)$ for all $m$ is referred to as a \emph{second main theorem} by Weyl. In this paper, we give two second main theorems when $p = 2$ by exhibiting two minimal generating sets for $\ker(\pi)$.  These sets have
    $$
        2^m - \binom{m}{2} - m - 1 + \binom{2^m-m}{2}
    $$
many generators. We note here that when $p=2$ then $s = 2^m + m - 1$. To our knowledge, this is the first proof of a second main theorem for a finite group in the modular case.

In 1916 Emmy Noether gave a characteristic $0$ bound on the degrees of generators of a ring of invariants for a finite group (the original paper is \cite{Noether-EndlInvaendlGrup:16}, for a modern treatment see \cite{Derksen+Kemper-Compinvatheo:02}).   She showed that the ring of invariants of any characteristic 0 representation of $G$ is generated in degrees less than or equal to $|G|$.
  In contrast, in his paper Richman proved that $\mathbb{K}[m\,V_2]^{C_p}$, for any $p$, required a generator of degree $m(p-1)$.  Thus he demonstrated that Noether's bound does not hold in general for modular groups, those groups whose order is divisible by the characteristic of the underlying field.  In this connection, it is worth noting that Symonds \cite{Symonds:11} recently proved Kemper's conjecture \cite{Kemper:97} that for any modular representation of a finite group, the ring of invariants can be generated in degrees less than or equal to
    $$
        \dim_\F(V)(|G|-1)
    $$
    if $|G|\geq 2$.
Symonds proof uses Castelnuovo-Mumford regularity and builds on his work with Karagueuzian, \cite{Karagueuzian-Symonds:07}.  Moreover, he proves that all the relations between the generators may be found in degrees less than or equal
    $$
        2\dim_\F(V)(|G|-1)
    $$
    if $|G|\geq 2$.
Our theorem implies that for the case we study, $\F_2[m\, V_2]^{C_2}$, this upper bound is sharp. The relation of largest degree is associated to the product
    $$
        \tr(y_1 y_2 \cdots y_m)^2\,.
    $$

By the Hilbert syzygy theorem, the projective dimension of our ring of invariants is less than or equal to $s = 2^m + m - 1$ and
(for $m\geq 3$) bigger than $s-2m = 2^m-m-1$ (the projective dimension of $\F_2[m\, V_2]^{C_2}$ cannot be $2^m-m-1$  since this ring is not Cohen-Macaulay).

\section{Preliminaries}

We refer to the book of Derksen and Kemper, \cite[\S 3.6]{Derksen+Kemper-Compinvatheo:02}.  Suppose we have a commutative graded unitary connected ring $S$ over $\F$ of Krull dimension $n$. Choose a homogeneous system of parameters $\set{z_1, z_2, \dots, z_n}$, and form the polynomial ring $R = \F[z_1, z_2, \dots, z_n]$. Then $S$ is a finitely generated $R$-module on (secondary) $R$-module homogeneous generators $\set{w_1, w_2, \dots, w_s}$.  Let $Q$ denote the polynomial ring $R[t_1, t_2, \dots, t_s]$ and consider the $R$-module homomorphism $\pi: Q \to S$ given by  $\pi(t_i) = w_i$. We have that the kernel of $\pi$ is generated by relations of the two forms:
\begin{itemize}
    \item[] \emph{Linear Relations:} $\qquad\quad\sum_{i=1}^s f_i t_i$;
    \item[] \emph{Quadratic Relations:} $\qquad t_it_j + \sum_{k=1}^s f_k t_k$
\end{itemize}
where all the $f_i, f_k \in R$.

For us, $R = \F[x_1, x_2, \dots, x_m, N_1, N_2, \dots, N_m]$, and, provided $p=2$ we are in the unusual situation that each secondary $R$-module generator $\tr(A)$ (for $|A| \ge 2$) is also a generating invariant.  This fails for $p \ge 3$.  In our situation, therefore, we will write
$$Q = R[\Tr(A)~\mid~A \in \set{0,1}^m,~|A| \ge 2]$$
 and set $\pi(\Tr(A)) = \tr(A)$. It is also natural to grade $Q$ by setting $\deg(x_i) =1$, $\deg(N_i) = 2$ and
 $\deg(\Tr(A)) =  |A| := \sum_{i=1}^m a_i$.  With this grading, $\ker(\pi)$ is homogeneous and, therefore, minimal ideal generating sets are those for which no proper subset generates.
  For the remainder of this paper, we refer to relations as occurring in either $Q$ or the ring of invariants.

\subsection{Notation}
Let $\N=\{0,1,2,\dots\}$.  Let $A=(a_1,a_2,\dots,a_m)\in \N^m$ denote an exponent sequence, for example,
    $$
        x^A = \prod_{s=1}^m x_s^{a_s}, \qquad   y^A = \prod_{s=1}^m y_s^{a_s}\quad\text{ and}\quad N^A = \prod_{s=1}^m N_s^{a_s}\,.
    $$

We write $|A|$ to denote $\sum_{s=1}^m a_s$.  Given $A, B \in \N^m$ we write $A \leq B$ if $a_s \leq b_s$ for all $s=1,2,\dots,m$.

We recalled above the invariant norms $N_s = y_s^2-x_sy_s$.  It is also useful to define $\barN_s = y_s  + x_s = N_s/y_s$.  Given an exponent sequence $A$, we note that
    $$
        \barN^A = \sum _{B \leq A} x^{A-B} y^B = \tr(A) + y^A\,.
    $$

If the sequence $A=(a_1,a_2,\dots,a_m)$ consists of only 0's and 1's, we may consider $A$ as the characteristic vector of  the set $\A := \{1 \leq s \leq m \mid a_s = 1\}$. Similarly we consider the zero/one sequence $B$ of length $m$ as the characteristic vector of the subset $\B$ of $\{1,2,\dots,m\}$.   We use this viewpoint to introduce three more binary operations on zero/one sequences.  Suppose $A, B \in \{0,1\}^m$.  We write $A \cap B$, $A\setminus B$ and $A \cup B$  to denote the zero/one sequences which are the characteristic vectors of the sets $\A \cap \B$, $\A \setminus \B$ and $\A \cup \B$ respectively.
From now on, we will abuse notation and write $A$ for $\A$ and $B$ for $\B$.
For convenience, we define $\Delta_s$ to be the sequence which is $1$ in position $s$ and $0$ everywhere else.

We are going to use the {\em graded reverse lexicographic} order on monomials with
    $$
        y_1 > x_1 > y_2 > x_2 > \cdots y_m > x_m\,.
    $$
We denote the lead term of a polynomial $f$ by $\LT(f)$.  For $A \neq 0$, we denote by $\ell(A)$ the least integer $\ell$ such that $a_\ell = 1$ and $a_i = 0$ for $1 \le i < \ell$, that is, $\ell(A) = \min\set{ \ell \mid a_\ell = 1}$.  We will denote by $A^\prime$ the sequence obtained from $A$ by setting the entry $\ell(A)$ equal to $0$.

\section{A second main theorem}

In this section, we first derive relations of two types, the first of which are linear, and the second of which are quadratic in the sense of Derksen and Kemper. Then we go on to show that these relations minimally generate the ideal of all relations $\ker(\pi)$.

The following lemma is easily shown.
\begin{lemma}\label{trace_formula}
Let $A \in \set{0,1}^m$.  Then
  $$
    \tr(A) = \sum_{L < A} x^{A-L} y^L\,.
  $$
Further, $\LT(\tr(A)) = x_{\ell(A)}y^{A^\prime}$.  \qed
\end{lemma}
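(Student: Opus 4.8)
The plan is to read the first identity directly off the expansion of $\barN^A$ recorded in the Preliminaries, and then to pin down the leading term by a short comparison of monomials in the graded reverse lexicographic order.

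For the formula itself, I would begin from the definition $\tr(A) = y^A + \prod_{s=1}^m (y_s+x_s)^{a_s}$ and note that, because $A \in \set{0,1}^m$, the product is exactly $\barN^A$. The Preliminaries supply $\barN^A = \sum_{L \le A} x^{A-L} y^L = \tr(A) + y^A$, so
\[
  \tr(A) = \barN^A + y^A = \sum_{L \le A} x^{A-L} y^L + y^A.
\]
The $L = A$ term contributes $x^{A-A}y^A = y^A$, which cancels the trailing $y^A$ since we are in characteristic $2$; what survives is $\sum_{L < A} x^{A-L} y^L$, as claimed. This is the whole content of the first assertion.

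For the leading term, the key observation is that every monomial $x^{A-L} y^L$ with $L < A$ has the same total degree $|A|$, so the grevlex comparison is settled entirely by its reverse-lexicographic tie-break. I would compare two index sets $L, L' \subseteq A$ by letting $s^\ast$ be the largest index of $A$ at which they differ, say $s^\ast \in L$ and $s^\ast \notin L'$. Above $s^\ast$ the two monomials agree, so the smallest variable in the order $y_1 > x_1 > \cdots > y_m > x_m$ at which they disagree is $x_{s^\ast}$, where the monomial coming from $L$ has exponent $0$ (it carries $y_{s^\ast}$) while that from $L'$ has exponent $1$; hence the $L$-monomial is the larger one. In short, to maximize one wants the large indices of $A$ to fall into $L$, contributing $y$'s rather than $x$'s.

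A brief exchange argument then identifies the maximizer. Since $L < A$ forces at least one index of $A$ to be left out of $L$, and since by the previous comparison omitting a larger index is more costly than omitting a smaller one, the unique optimal choice leaves out only the least index $\ell(A)$, that is $L = A'$. This yields $\LT(\tr(A)) = x^{A-A'} y^{A'} = x_{\ell(A)} y^{A'}$. I expect the only delicate point to be the bookkeeping of the grevlex tie-break, in particular correctly singling out $x_{s^\ast}$ as the smallest variable of disagreement and reading off the direction of the resulting inequality; beyond that everything is routine, consistent with the lemma being ``easily shown.''
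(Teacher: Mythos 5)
Your proof is correct and is exactly the argument the paper intends by ``easily shown'': the identity follows from the expansion $\barN^A = \sum_{L \le A} x^{A-L}y^L = \tr(A) + y^A$ recorded in the Preliminaries, with the $L=A$ term cancelling $y^A$ in characteristic $2$, and the leading term follows from the grevlex tie-break, which you apply with the right convention (smaller exponent in the smallest disagreeing variable, here $x_{s^\ast}$, wins) so that the unique maximizer omits only $\ell(A)$, giving $\LT(\tr(A)) = x_{\ell(A)}y^{A'}$. Nothing is missing; your exchange argument correctly singles out $L = A'$ among proper subsets of $A$.
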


Applying the trace to the formula in the above lemma yields the relation $0=\sum_{0 < I < A} x^{A-I}  \tr(y^{I})$.   If $|A| \leq 2$, then this relation is vacuous.  Otherwise we have a meaningful relation which we record in the following

\begin{proposition}\label{type_I}  Suppose $|A| \geq 3$.  Then
  $$
    \sum_{0 < L < A} x^{A-L}\Tr(L) \in \ker(\pi)\,.
  $$
Define $\ell(A) = i$ and $\ell(A^\prime) = j$. Then
    $$
        \LT(\pi(x^{A-L}\Tr(L)) \le x_i x_j y^{(A^\prime)^\prime} \text{ for all } L < A
    $$
with equality if and only if either $A - L = \Delta_i$ or $A-L = \Delta_j$.
\end{proposition}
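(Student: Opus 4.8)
The plan is to get the membership in $\ker(\pi)$ by applying the transfer operator to the identity of Lemma~\ref{trace_formula}, and then to control the leading terms by a direct grevlex comparison of squarefree monomials.

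For the first assertion I would apply the transfer $\sum_{\sigma\in C_2}\sigma=\mathrm{id}+\sigma$ to both sides of $\tr(A)=\sum_{L<A}x^{A-L}y^L$. The left-hand side is the transfer of the invariant $\tr(A)$, hence equals $2\tr(A)=0$ in characteristic $2$. On the right-hand side each $x^{A-L}$ is fixed by $\sigma$, so the transfer passes through these coefficients and sends $y^L$ to $\tr(L)=y^L+\barN^L$; thus the right-hand side becomes $\sum_{0\le L<A}x^{A-L}\tr(L)$. The term $L=0$ drops out since $\tr(0)=1+1=0$, and for $|L|=1$ one has $\tr(\Delta_s)=y_s+\barN_s=x_s$, which is the element of $R$ that $\Tr(\Delta_s)$ denotes in $Q$; so the computation reads $\pi\big(\sum_{0<L<A}x^{A-L}\Tr(L)\big)=0$, which is exactly the claimed membership. (For $|A|\le 2$ the surviving terms cancel in characteristic $2$, which is why the relation is recorded only for $|A|\ge 3$.)

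For the leading-term estimate, write the support of $A$ as $i_1<i_2<\dots<i_k$, so that $i=i_1$, $j=i_2$, and $(A')'$ has support $\{i_3,\dots,i_k\}$; hence $x_ix_jy^{(A')'}$ is the squarefree monomial carrying $x$ at the two smallest indices of $A$ and $y$ at the remaining ones. Since grevlex is a monomial order, for each $L$ with $0<L<A$ (the case $L=0$ being trivial, as that term vanishes) Lemma~\ref{trace_formula} gives $\LT\big(\pi(x^{A-L}\Tr(L))\big)=x^{A-L}\,\LT(\tr(L))=x^{A-L+\Delta_{\ell(L)}}\,y^{L-\Delta_{\ell(L)}}$. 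I would observe that this monomial is again squarefree and assigns to each index $t\in A$ exactly one variable: an $x$ at the indices in $(A\setminus L)\cup\{\ell(L)\}$ and a $y$ at the indices in $L\setminus\{\ell(L)\}$. Both monomials have degree $|A|$, so the comparison is governed by the following fact about equal-degree squarefree monomials under the order $y_1>x_1>\dots>y_m>x_m$: the larger one is the one avoiding the smallest variable on which they differ, equivalently, at the largest index where the two variable-assignments disagree, the monomial carrying $y$ there is the larger.

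The heart of the argument is then a scan from the largest index downward. Because $x_ix_jy^{(A')'}$ carries $y$ at every index $i_3,\dots,i_k$, if the two monomials disagree at some index $\ge i_3$, then at the largest such index the target carries $y$ while $\LT(x^{A-L}\tr(L))$ carries $x$, forcing strict inequality $\LT(x^{A-L}\tr(L))<x_ix_jy^{(A')'}$. It therefore suffices to rule out agreement on the whole block $\{i_3,\dots,i_k\}$ except in the stated equality cases, and I expect this to be the main obstacle. I would resolve it using the constraint from $\ell(L)$: agreement on $\{i_3,\dots,i_k\}$ forces $\{i_3,\dots,i_k\}\subseteq L\setminus\{\ell(L)\}$, whence $\ell(L)\in\{i_1,i_2\}$ and $A\setminus L\subseteq\{i_1,i_2\}$, so all the $x$'s of $\LT(x^{A-L}\tr(L))$ lie in $\{i_1,i_2\}$; counting $x$'s then gives $|L|=k-1$ and the monomial equals $x_ix_jy^{(A')'}$ exactly. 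Tracking which single index is deleted shows this happens precisely when $A-L=\Delta_i$ (delete $i_1$, so $\ell(L)=i_2$) or $A-L=\Delta_j$ (delete $i_2$, so $\ell(L)=i_1$), and a direct check confirms both of these realize the target monomial, while every other $L$ yields strict inequality. The delicate bookkeeping is that the number of $x$'s in $\LT(x^{A-L}\tr(L))$ varies with $|L|$, so equality of total degree is what keeps the comparison meaningful; the decisive point is that $\ell(L)$ always plants an $x$ at the smallest index of $L$, which is exactly what blocks a spuriously larger leading term.
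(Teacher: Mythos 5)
Your proposal is correct and follows essentially the same route as the paper: the membership in $\ker(\pi)$ comes from applying the transfer $\mathrm{id}+\sigma$ to the identity of Lemma~\ref{trace_formula}, and the leading-term claim comes from the multiplicativity computation $\LT(x^{A-L}\tr(L)) = x^{A-L}x_{\ell(L)}y^{L'}$. The only difference is that you carry out in full the grevlex comparison of these squarefree monomials (which the paper leaves implicit), and your bookkeeping there — including the identification of the two equality cases $A-L=\Delta_i$ and $A-L=\Delta_j$ — is accurate.
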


\begin{proof}
  The first assertion is easily shown as explained above.
  To show the second assertion it suffices to note that $\LT(\tr(x^{A-L}y^L))=x^{A_L}\LT(\tr(y^L)) = x^{A-L}x_{\ell(L)}y^{L'}$.
\end{proof}

We will call the relation in the above proposition the \emph{relation of type~I} associated to the subset $A$ of $\set{0,1}^m$.

\begin{remark}
The relation of type~I corresponding to $\tr(y_1y_2y_3)$, (i.e., to $A=\Delta_1 + \Delta_2 + \Delta_3$) can be used to show that our invariant rings are not Cohen-Macaulay when $m \geq 3$.
\end{remark}

\medskip
Next we describe the \emph{relations of type~II}.

\begin{proposition}\label{type_II}
Let $A, B \in \set{0,1}^m$.  Suppose $|A|\geq 2$, $|B| \geq 2$.  We define $I := A \cap B$, $J := A \setminus B$ and $K := B \setminus A$.
Then
  $$
    \Tr(A) \Tr(B) + \sum_{L < I} x^{I-L} N^L \Tr({I-L+J+K}) + N^I \sum_{L < J} x^{J-L}\Tr(L+K)
  $$
is in $\ker(\pi)$.
\end{proposition}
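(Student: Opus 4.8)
The plan is to verify the stated relation by applying $\pi$ and checking the resulting identity
$$
    \tr(A)\tr(B) = \sum_{L < I} x^{I-L} N^L \tr(I-L+J+K) + N^I \sum_{L < J} x^{J-L}\tr(L+K)
$$
directly in the invariant ring $\F[m\,V_2]^{C_2}$ (in characteristic $2$ the signs in the proposition are immaterial). The two facts I would lean on are elementary. First, $\sigma$ interchanges $y_s$ and $\barN_s = y_s+x_s$ while fixing $x_s$, so that $\tr(f)=f+\sigma(f)$ and, in particular, $\sigma(y^E)=\barN^E$ for any exponent sequence $E$. Second, for the overlap set $I$ one has $\barN^I y^I = \prod_{s\in I}(y_s\barN_s) = \prod_{s\in I} N_s = N^I$, since $N_s = y_s\barN_s$ is invariant.

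First I would write $A = I+J$ and $B = I+K$ with $I,J,K$ pairwise disjoint, expand $\tr(A)\tr(B)=(y^A+\barN^A)(y^B+\barN^B)$ into its four terms, and regroup. Using $\sigma(y^{2I+J+K})=\barN^{2I+J+K}$, the two ``diagonal'' terms combine into $y^{A+B}+\barN^{A+B}=\tr(y^{2I+J+K})$. For the two ``cross'' terms, the identity $\barN^I y^I = N^I$ gives $y^A\barN^B = N^I y^J\barN^K$ and $\barN^A y^B = N^I\barN^J y^K$, and since $\sigma(\barN^J y^K)=y^J\barN^K$ these combine into $N^I\,\tr(\barN^J y^K)$. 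Thus
$$
    \tr(A)\tr(B)=\tr(y^{2I+J+K}) + N^I\,\tr(\barN^J y^K).
$$

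Next I would expand each trace and transfer term by term, pulling the invariants $x^{I-L}N^L$ and $N^I$ out of the trace. For the first, the reduction $y_s^2 = N_s + x_s y_s$ yields $(y^I)^2=\sum_{L\le I} x^{I-L}N^L y^{I-L}$, so, splitting off the top term $L=I$,
$$
    \tr(y^{2I+J+K}) = N^I\tr(J+K) + \sum_{L<I} x^{I-L}N^L\,\tr(I-L+J+K).
$$
For the second, expanding $\barN^J=\sum_{J'\le J} x^{J-J'}y^{J'}$ and splitting off $J'=J$ gives
$$
    N^I\,\tr(\barN^J y^K) = N^I\tr(J+K) + N^I\sum_{L<J} x^{J-L}\tr(L+K).
$$

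The crux of the argument — and where characteristic $2$ is essential — is that the two stray terms $N^I\tr(J+K)$ produced by the two splittings are identical, so their sum is $2N^I\tr(J+K)=0$ and they cancel, leaving exactly the claimed right-hand side. I do not expect a serious obstacle in the main computation; the only genuine bookkeeping to watch is the degenerate low-degree behaviour, where $|I-L+J+K|$ or $|L+K|$ drops to $0$ or $1$ and the symbol $\Tr$ must be read through the conventions $\tr(\Delta_s)=x_s$ and $\tr(0)=0$ so that the whole expression lives in $Q$. I would check that the cancellation and these readings remain consistent in the boundary cases $I=\emptyset$, $J=\emptyset$, and $K=\emptyset$, each of which simply empties one of the two sums while preserving the identity.
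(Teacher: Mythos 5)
Your proof is correct. It starts from the same expansion as the paper --- $\tr(A)\tr(B)=(y^A+\barN^A)(y^B+\barN^B)$, together with the identities $y_s\barN_s=N_s$, $y_s^2=N_s+x_sy_s$ and $\barN_s=x_s+y_s$ --- but the way you carry the computation through is genuinely different and cleaner. You work with the transfer map throughout, using its two structural properties: $\sigma$ fixes $x_s$ and interchanges $y^E\leftrightarrow\barN^E$, and $\tr$ is linear over invariants. This collapses the four terms at once into $\tr(y^{A+B})+N^I\tr(\barN^Jy^K)$, after which the expansions $y^{2I}=\sum_{L\le I}x^{I-L}N^Ly^{I-L}$ and $\barN^J=\sum_{L\le J}x^{J-L}y^L$, performed \emph{inside} the traces, can only ever produce trace terms; the single characteristic-$2$ cancellation $2N^I\tr(J+K)=0$ then finishes the argument. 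The paper, by contrast, never invokes $\sigma$-equivariance: it expands $\barN^{2I}$ and $y^A\barN^B$, substitutes $\barN^E=\tr(E)+y^E$ term by term, and must then verify in two separate computations that the residual monomial sums $\sum_{L\le I}x^{I-L}N^Ly^{I+J+K-L}$ and $N^I\sum_{L\le J}x^{J-L}y^{K+L}$ equal $y^{A+B}$ and $y^B\barN^A$ respectively, so that they cancel the two leftover summands; this occupies the final third of its proof and is exactly what your bookkeeping renders automatic (the stray $N^I\tr(J+K)$ cancellation is common to both arguments). Your closing remark on the degenerate readings $\Tr(\Delta_s)\mapsto x_s$ and $\Tr(0)\mapsto 0$ --- needed, e.g., when $A=B$ or $B\subsetneq A$ --- is also correct, since $\pi$ must send these symbols to $\tr(y_s)=x_s$ and $\tr(1)=0$; the paper leaves this point implicit.
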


\begin{remark}
We note that this relation is not symmetric in $A$ and $B$.  The formula above holds but is vacuous if $|A| = 1$.  If $|B| = 1$, then the formula holds, but is a consequence of relations of type~I.
\end{remark}

\begin{proof}
We have
  \begin{align*}
    \tr(A) \tr(B) &= (\barN^A + y^A)(\barN^B + y^B)\\
                             &= \barN^{A+B} + y^A\barN^B + y^B\barN^A + y^{A+B}\\
                             & =\barN^{2I}\barN^{J+K} + y^A\barN^B+ y^B\barN^A + y^{A+B}.
 \end{align*}
Since
  \begin{align*}
      \barN^{2I} &= \prod_{s=1}^m (y_s + x_s)^{2i_s} =  \prod_{s=1}^m (y_s^2 + x_s^2)^{i_s} \\
                 &= \prod_{s=1}^m (x_s \barN_s + N_s)^{i_s} = \sum_{L\leq I} x^{I-L}\barN^{I-L} N^L \\
  \end{align*}
and
  \begin{align*}
    y^A \barN^B &= y^I y^J \barN^I \barN^K = N^I y^J \barN^K = N^I \barN^K \prod_{s=1}^m (x_s+y_s+x_s)^j_s\\
                & = N^I \barN^K \prod_{s=1}^m (x_s+\barN_s)^j_s = N^I \barN^K \sum_{L \leq J} x^{J-L} \barN^L \\
                &= N^I \sum_{L \leq J} x^{J-L} \barN^{K+L}
  \end{align*}
we have
  \begin{align*}
    \tr(A) \tr(B)  &= \sum_{L\leq I} x^{I-L}\barN^{I-L} N^L \barN^{J+K} + N^I \sum_{L \leq J} x^{J-L} \barN^{K+L} \\
                    &\qquad+ y^B\barN^A + y^{A+B}\\
                    & = \sum_{L \leq I} x^{I-L} N^L \barN^{I+J+K-L}+ N^I\sum_{L \leq J} x^{J-L} \barN^{K+L} \\
                    &\qquad+ y^B\barN^A + y^{A+B}\ .
  \end{align*}
This last expression yields
    \begin{align*}
   \tr(A) \tr(B) &= \sum_{L \leq I} x^{I-L} N^L (\tr(I+J+K-L) + y^{I+J+K-L})\\
             &\qquad+ N^I\sum_{L \leq J} x^{J-L} (\tr(K+L) + y^{K+L}) + y^B\barN^A\\
             &\qquad+ y^{A+B}\,,
    \end{align*}
and therefore
    \begin{align*}
    \tr(A) \tr(B) &= \sum_{L < I} x^{I-L} N^L \tr(I+J+K-L) + N^I\tr(J+K) \\
                &\qquad + \sum_{L \leq I} x^{I-L} N^L y^{I+J+K-L} + N^I\sum_{L < J} x^{J-L} \tr(K+L)\\
             &\qquad + N^I \tr(K+J) + N^I\sum_{L \leq J} x^{J-L} y^{K+L}\\
             &\qquad+ y^B\barN^A + y^{A+B}\,,
  \end{align*}
from which we obtain
    \begin{align*}
    \tr(A) \tr(B)&= \sum_{L < I} x^{I-L} N^L \tr(I+J+K-L) + \sum_{L \leq I} x^{I-L} N^L y^{I+J+K-L}\\
             &\qquad+ N^I\sum_{L < J} x^{J-L} \tr(K+L) + N^I\sum_{L \leq J} x^{J-L} y^{K+L}\\
             &\qquad+ y^B\barN^A + y^{A+B}\,
  \end{align*}
Now
  \begin{align*}
      \sum_{L \leq I} x^{I-L} N^L y^{I+J+K-L} &= y^{J+K}(\sum_{L \leq I} x^{I-L} N^L y^{I-L})\\
      &=  y^{J+K}(\sum_{L \leq I}  N^L x^{I-L}y^{I-L})=  y^{J+K}(N+xy)^I \\
      &= y^{J+L} (y^2)^I = y^{2I+J+L} = y^{A+B}\,.
  \end{align*}
Finally
  \begin{align*}
   N^I\sum_{L \leq J} x^{J-L} y^{K+L}&= N^I y^K \sum_{L \leq J} x^{J-L} y^L   \\
   &= y^I \barN^I y^K \barN^J = y^{I+K} \barN^{I+J} = y^B \barN^A
  \end{align*}
completing the proof.
\end{proof}

For each pair $A, B \in \set{0,1}^m$ with $A \neq B$, we have two relations given in Proposition~\ref{type_II} since these relations are not symmetric in $A$ and $B$.  In order to produce a minimal set of generating relations, we need to choose one of these two.

\begin{theorem}[A second main theorem]\label{all_relns_from_I_II}
The ideal of relations among the generators of $\F[m\,V_2]^{C_2}$ is minimally generated by
\begin{itemize}
    \item the $2^m - \binom{m}{2}-m-1$ relations of type~I associated to the subsets $A \subset \set{1,2, \dots m}~\text{with}~ |A| \ge 3$
    \item  $\binom{2^m-m}{2}$ relations of type~II associated to the (unordered) product of any two of the $2^m - m - 1$ traces.
\end{itemize}
\end{theorem}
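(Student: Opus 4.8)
The plan is to prove generation and minimality separately, organizing everything around the fact that $\fieldv$ is a \emph{free} module over $R=\F[x_1,\dots,x_m,N_1,\dots,N_m]$ with basis $\set{y^E : E\in\set{0,1}^m}$, since $N_s=y_s^2+x_sy_s$ is monic of degree $2$ in $y_s$ over $\F[x_s]$. In this basis the module generators of $S:=\F[m\,V_2]^{C_2}$ are $1=y^\emptyset$ and, by Lemma~\ref{trace_formula}, $\tr(A)=\sum_{L<A}x^{A-L}y^L$ for $|A|\ge2$; the freeness also shows that $\set{x_i,N_i}$ is a homogeneous system of parameters. By the theory of Derksen and Kemper \cite[\S3.6]{Derksen+Kemper-Compinvatheo:02}, $\ker(\pi)$ is generated by the linear relations (the first syzygies of these module generators over $R$) together with the quadratic relations $\Tr(A)\Tr(B)+(\text{$R$-linear in the }\Tr)$. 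Writing $t_{A,B}$ for the relation of Proposition~\ref{type_II}, each quadratic relation differs from some $t_{A,B}$ by a linear relation; hence $\ker(\pi)=\langle\text{type~I}\rangle+\langle\text{type~II}\rangle$ will follow once I show that every linear relation lies in the ideal generated by the type~I relations.

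For the linear relations I would first reduce to coefficients in $\F[x]:=\F[x_1,\dots,x_m]$: expanding a syzygy $\sum_A f_A\tr(A)=0$ in the free basis yields, for each nonempty $E$, the equation $\sum_{A\supsetneq E}f_A\,x^{A-E}=0$ (with $f_\emptyset$ determined by the $E=\emptyset$ equation), whose matrix has entries in $\F[x]$; since $R=\F[x][N_1,\dots,N_m]$ is free over $\F[x]$, the syzygy module over $R$ is the base change of the one over $\F[x]$, so it suffices to work over $\F[x]$. I would then argue by downward induction on the largest $k_0$ with some $f_A\ne0$, $|A|=k_0$. The top equations force the coefficient $f_{\text{full}}$ of the longest trace to vanish, so $k_0\le m-1$, and for $|E|=k_0-1$ the equation collapses to $\sum_{j\notin E}x_j\,f_{E\cup\set{j}}=0$. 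This says precisely that the top part $\phi=(f_A)_{|A|=k_0}$ is a cycle for the Koszul complex of the regular sequence $x_1,\dots,x_m$; as that sequence is regular, the Koszul homology vanishes in positive degree, so $\phi$ is a boundary. The Koszul boundary indexed by a set $C$ with $|C|=k_0+1$ is exactly $\sum_{j\in C}x_j\,e_{C\setminus\set{j}}$, the top part of the type~I relation $v^{(C)}$ of Proposition~\ref{type_I}; subtracting the corresponding combination of the $v^{(C)}$ (all of whose terms have degree $\le k_0$) kills the level-$k_0$ part without disturbing higher levels, and the induction proceeds. This Koszul-homology step is, I expect, the conceptual heart of the argument.

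For minimality I would work in the graded space $\ker(\pi)/\mathfrak{m}\ker(\pi)$, where $\mathfrak{m}=(x_i,N_i,\Tr(A))$, and show degree by degree that the proposed generators are $\F$-independent there, using two functionals on $Q$. The first, $\Pi$, extracts the $\Tr$-degree-$2$ part and reduces its $R$-coefficients modulo $(x_i,N_i)$: because every linear coefficient occurring in $t_{A,B}$ (the monomials $x^{I-L}N^L$ and $N^Ix^{J-L}$ of Proposition~\ref{type_II}) lies in $(x_i,N_i)$, one gets $\Pi(t_{A,B})=\Tr(A)\Tr(B)$, while $\Pi$ annihilates every $v^{(C)}$ and all of $\mathfrak{m}\ker(\pi)$; since the $\Tr(A)\Tr(B)$ are distinct monomials this forces all type~II coefficients to vanish. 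The second functional $\lambda$ reads off the coefficient of a chosen monomial $x_{j_0}\Tr(C_0\setminus\set{j_0})$; a degree count (using $|C|\ge3$ for type~I and $|A|+|B|\ge4$ for type~II) gives $\lambda(v^{(C)})=\delta_{C,C_0}$ and $\lambda(\mathfrak{m}\ker(\pi))=0$, killing the remaining type~I coefficients. Counting the two families as $2^m-\binom{m}{2}-m-1$ and $\binom{2^m-m}{2}$ then completes the statement. The main difficulties I anticipate are bookkeeping: matching the Koszul boundary to the top part of the type~I relations exactly, and verifying through the degree estimates that both functionals really annihilate $\mathfrak{m}\ker(\pi)$.
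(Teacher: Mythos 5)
Your proposal is correct in its essentials but reaches the theorem by a genuinely different route than the paper, and it is worth recording the contrast. For the generation half, the paper never leaves $Q$: it runs a minimal-counterexample argument in the graded reverse lexicographic order, using the lead-term statement of Proposition~\ref{type_I} (the maximal lead term $\Gamma(h)$ of a linear relation $h$ is attained by at least two of its terms, and is also attained exactly twice in a suitable type~I relation), and subtracts that type~I relation to strictly decrease the number of terms attaining $\Gamma(h)$, hence eventually $\Gamma(h)$ itself. You instead exploit the freeness of $\F[m\,V_2]$ over $R$ on the basis $\set{y^E}$, descend to $\F[x_1,\dots,x_m]$ by flat base change, and recognize the top-level equations of a linear syzygy as the cycle condition in the Koszul complex of the regular sequence $x_1,\dots,x_m$, with the top parts $\sum_{j\in C} x_j \Tr(C\setminus\set{j})$ of the type~I relations being precisely the Koszul boundaries. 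That is more conceptual: it explains \emph{why} the type~I relations suffice (they are Koszul syzygies) rather than verifying it by term-order bookkeeping, at the cost of invoking exactness of the Koszul complex and the base-change identification. Your handling of the quadratic relations via Derksen--Kemper is identical to the paper's. Notably, your Nakayama-style minimality argument addresses a point the paper's proof does not actually carry out (its proof establishes generation only), so on that score your proposal is more complete than the source.

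Two caveats on the minimality half. First, your parenthetical claim that a degree count makes $\lambda$ vanish on the type~II relations is false: when $A\cap B=\emptyset$ (so $I=\emptyset$, $N^I=1$) the second sum in Proposition~\ref{type_II} contains the monomials $x_j\Tr\bigl((A\setminus\set{j})+B\bigr)$ for $j\in A$, which are exactly of the shape $\lambda$ detects and sit in the correct degree $|A|+|B|=|A\cup B|$. This does not break your argument, because you apply $\Pi$ first and all type~II coefficients are already zero by the time $\lambda$ is used; but the degree-count justification should be deleted, and the sequential order of the two functionals must be made explicit, since it is essential. Second, the assertions $\Pi(\mathfrak{m}\ker(\pi))=0$ and $\lambda(\mathfrak{m}\ker(\pi))=0$ are not mere bookkeeping: for products $\Tr(A)\cdot w$ and $x_i\cdot w$ with $w\in\ker(\pi)$ they require that no homogeneous relation has a linear term $\Tr(D)$ with unit coefficient, i.e.\ that $\ker(\pi)\subseteq\mathfrak{m}^2$. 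This is equivalent to the minimality of the generating set $\set{x_i,N_i,\tr(A)}$ of the invariant ring, which is true but is an external input (Shank--Wehlau, cited in the paper's introduction) or needs its own short argument; you should invoke it explicitly at that point.
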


\begin{proof}
We first prove that the set above generates the full set of relations.  By \cite[\S 3.6]{Derksen+Kemper-Compinvatheo:02}, we need only show that we have an $R$-module basis for all the linear relations.  We proceed by contradiction. Given an element $h \in Q$, we write
    $$
        h = \sum_{I,J,A} \alpha_{I,J,A}x^I N^J \Tr(A), \text{ for }\alpha \in \F
    $$
and define
    $$
        \Gamma(h) = \max_{I,J,A}\set{\LT(x^I N^J\tr(A))~\mid~\alpha_{I,J,A} \ne 0}
    $$
Choose a polynomial $h$ such that $\Gamma(h)$ is minimal among all the linear relations that are not in the $R$-module generated by the type~I relations. Therefore, there exist $I_1, J_1, A_1$ such that
    $$
        \LT(x^{I_1}N^{J_1}\tr{A_1}) = \Gamma(h) = x^{I_1}y^{2J_1}x_{\ell(A_1)}y^{A_1^\prime}\,.
    $$
We will write $\ell(A_1) = a_1$ and $\ell(A_2) = a_2$.  Since $h$ describes a relation, there must exist
 $(I_2, J_2, A_2)\neq (I_1,J_1,A_1)$ such that $x^{I_2}N^{J_2}\tr{A_2}$
$\LT(x^{I_2}N^{J_2}\tr{A_2}) = \Gamma(h)$.
That is,
    $$
        x^{I_1}y^{2J_1}x_{a_1}y^{A_1^\prime} = x^{I_2}y^{J_2}x_{a_2}y^{A_2^\prime}\,,
    $$
so that we have $J_1 = J_2$, $A_1^\prime = A_2^\prime$ and $I_1 + \Delta_{a_1} = I_2 + \Delta_{a_2}$.

We form the type~I relation associated to $A_1 + \Delta_{a_2}$:
    $$
        x^{I_1 - \Delta_{a_2}} N^{J_1}(\sum_{L < A_1 + \Delta_{a_2}}x^{A_1+ \Delta_{a_2}-L}\Tr(L))\,,
    $$
and, as we noted in Proposition \ref{type_I},
    $$
        \Gamma(h) = \Gamma(x^{I_1 - \Delta_{a_2}} N^{J_1}(\sum_{L < A_1 + \Delta_{a_2}}x^{A_1+ \Delta_{a_2}-L}\Tr(L)))
    $$
and occurs as the lead term of just two of the monomials in this sum (after applying $\pi$).  Therefore, we define
    $$
        h^\prime = h - x^{I_1 - \Delta_{a_2}} N^{J_1}(\sum_{L < A_1 + \Delta_{a_2}}x^{A_1+ \Delta_{a_2}-L}\Tr(L))\,,
    $$
and note that $\Gamma(h^\prime) \le \Gamma(h)$.  If $\Gamma(h^\prime) = \Gamma(h)$, then we have reduced the number of terms with this lead term and repeating the process will lead, after a finite number of steps, to a new $h^\prime$ with a smaller lead term.  But $\Gamma(h^\prime) < \Gamma(h)$ contradicts the definition of $h$, as we were required to prove.
\end{proof}

\section{Relations of type III and another second main theorem}

Here we obtain a new set of quadratic relations shorter than the type~II relations of the previous section.  We also show that these relations minimally generate all quadratic relations, thus giving another version of the second main theorem.
\begin{theorem}
We assume that $A,B \in \set{0,1}^m$ and that $|A|, |B| \ge 2$.
    \begin{align*}
        \text{{\bf III (a)}: Suppo}&\text{se } A \cap B = \emptyset. \text{ Let }j \in B \text{ and put }
               B' = B - \Delta_j.\\
            \Tr(A)\Tr(B)  &+  \Tr(A + \Delta_j) \Tr(B') + x_{j} \Tr(A+B')\\
                &\qquad+ x_{j} \Tr(A) \Tr(B') \in \ker(\pi)\\
        \text{{\bf III (b)}: Suppo}&\text{se }|A \cap B| \ge 1\text{ and } B \leq A.\\
              \text{  Let }& i \in A \cap B \text{ and put } B' = B - \Delta_i \text{ and } A' = A - \Delta_i.\\
                \Tr(A) \Tr(B) &+  x_{i} \Tr(A)\Tr(B') + {\Norm}_{i} \Tr(A')\Tr(B') \\
                &\qquad +  x_{i} {\Norm}^{B'}\Tr(J + \Delta_i) \in \ker(\pi) \\
        \text{{\bf III (c)}: Suppo}&\text{se } |A \cap B| \ge 1\text{ and } A \not\leq B \text{ and } B \not\leq A.\\
          \text{ Put }& I = A \cap B, J = A \setminus B \text{ and }K = B \setminus A.\\
             \Tr(A)\Tr(B) &+ \Tr(I+J+K)\Tr(I) + {\Norm}^{I}\Tr(J)\Tr(K) \in \ker(\pi)\,.
    \end{align*}
   \end{theorem}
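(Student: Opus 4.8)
The plan is to derive each of the three relations III(a), III(b), III(c) by combining the already-established type~II relations of Proposition~\ref{type_II}, rather than by recomputing traces from scratch. Since Proposition~\ref{type_II} gives, for every ordered pair with $|A|,|B|\ge 2$, an explicit element of $\ker(\pi)$ expressing $\Tr(A)\Tr(B)$ in terms of $x$'s, $N$'s and single traces, the strategy is to write down the type~II expansion for the relevant products appearing in each statement and check that the claimed linear combination telescopes to zero modulo $\ker(\pi)$. All identities are in the polynomial ring $Q$, so ``being in $\ker(\pi)$'' just means the image under $\pi$ vanishes, and it suffices to verify that the image of each displayed expression is identically $0$ in $\F[m\,V_2]^{C_2}$.

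For III(a), where $A\cap B=\emptyset$, the four products $\Tr(A)\Tr(B)$, $\Tr(A+\Delta_j)\Tr(B')$, $\Tr(A)\Tr(B')$ all have disjoint index supports (here $I=\emptyset$ in the type~II notation, so $N^I=1$ and the formula simplifies drastically). First I would expand each product via Proposition~\ref{type_II}; because $I=A\cap B=\emptyset$, each type~II expansion collapses to $\Tr(A)\Tr(B)=\Tr(A+B)+\text{(lower terms involving }x)$. The extra term $x_j\Tr(A+B')$ and the split of $B$ into $B'+\Delta_j$ are then engineered so that the single-trace contributions $\Tr(A+B)$ cancel against $\Tr(A+\Delta_j+B')=\Tr(A+B)$, and the residual $x_j$-weighted traces match $x_j\Tr(A+B')$ and $x_j\Tr(A)\Tr(B')$. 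For III(b), where $B\le A$ so $K=B\setminus A=\emptyset$ and $J=A\setminus B$, I would again expand $\Tr(A)\Tr(B)$, $\Tr(A)\Tr(B')$, and $\Tr(A')\Tr(B')$ by type~II, tracking the norm factors $N_i$ and $N^{B'}$ carefully; peeling off the single index $i\in A\cap B$ via $\bar N_i^2=x_i\bar N_i+N_i$ (the key identity already used in the proof of Proposition~\ref{type_II}) produces exactly the $x_i$ and $N_i$ coefficients in the statement. For III(c), the general case, the clean form $\Tr(A)\Tr(B)+\Tr(I+J+K)\Tr(I)+N^I\Tr(J)\Tr(K)$ suggests comparing the type~II expansion of $\Tr(A)\Tr(B)$ with those of $\Tr(I+J+K)\Tr(I)$ and $\Tr(J)\Tr(K)$; since $A+B=I+J+K+I$ as exponent sequences and $N^I$ factors out of the overlap, the three type~II expansions should share identical single-trace tails that cancel in pairs, leaving only the norm-weighted product $N^I\Tr(J)\Tr(K)$.

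Alternatively, and perhaps more transparently, I would work directly with the identity $\tr(A)=\bar N^A+y^A$ from the Notation section, so that $\tr(A)\tr(B)=(\bar N^A+y^A)(\bar N^B+y^B)$, and verify each of III(a)--(c) by expanding every trace into its $\bar N$-and-$y$ form and collecting terms. This is the same mechanism that drove the proof of Proposition~\ref{type_II}, and it has the advantage that the bookkeeping is purely monomial: the disjointness or containment hypotheses ($A\cap B=\emptyset$, $B\le A$, or the general $I,J,K$ split) control exactly which cross terms $y^A\bar N^B$ survive and how the norms $N_s=y_s\bar N_s$ recombine.

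The main obstacle I anticipate is the bookkeeping in III(b), where neither $A\cap B$ is empty nor is the situation fully symmetric: here both a single factor $x_i\Tr(A)\Tr(B')$ and a single factor $N_i\Tr(A')\Tr(B')$ appear, so I must split off the distinguished index $i$ cleanly while keeping the remaining norm factor $N^{B'}$ intact, and the term $x_i N^{B'}\Tr(J+\Delta_i)$ must be shown to absorb precisely the leftover $y$-monomials that do not cancel. Verifying that no extra single-trace or pure-monomial terms are orphaned — i.e.\ that the telescoping is exact and not merely leading-order — is the delicate step; I would do this by matching lead terms under the graded reverse lexicographic order (Lemma~\ref{trace_formula} gives $\LT(\tr(A))=x_{\ell(A)}y^{A'}$) and then inducting downward, exactly as in the proof of Theorem~\ref{all_relns_from_I_II}, to confirm the full cancellation.
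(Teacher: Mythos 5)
Your proposal is an outline of two possible strategies, but neither one is actually carried out for even one of the three cases, so the entire content of the theorem --- the exact verification of three polynomial identities --- is missing. Worse, the cancellation mechanisms you describe are incorrect precisely where they are specific. For III(a): with $I=A\cap B=\emptyset$, Proposition~\ref{type_II} gives $\Tr(A)\Tr(B)+\sum_{L<A}x^{A-L}\Tr(L+B)\in\ker(\pi)$; the sum runs over $L$ \emph{strictly} below $A$, so every term carries a positive power of $x$ and there is no term $\Tr(A+B)$ at all. Hence your claimed cancellation of ``$\Tr(A+B)$ against $\Tr(A+\Delta_j+B')$'' cannot occur; the telescoping that does work requires expanding all four products by type~II and splitting the sum $\sum_{L<A+\Delta_j}$ according to whether $j\in L$, a computation you never perform. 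For III(c): adding the type~II expansions of $\Tr(A)\Tr(B)$, of $\Tr(I+J+K)\Tr(I)$, and $N^I$ times that of $\Tr(J)\Tr(K)$, the tails do \emph{not} ``cancel in pairs, leaving only the norm-weighted product'': what survives is $N^I\sum_{0<L<J+K}x^{J+K-L}\Tr(L)$, a nonzero element of $Q$, and one must invoke the type~I relation associated to $J+K$ to dispose of it. So even the route through Proposition~\ref{type_II} needs the type~I relations, contrary to your description. Finally, III(b) --- which you yourself single out as the delicate case --- is not addressed at all beyond the assertion that the leftover terms ``must be shown'' to be absorbed.

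For comparison, the paper proves the theorem by a sharpened version of your second (alternative) route: from $\tr(C)+y^C=\barN^C$ it derives the identity $\tr(C+D)+\tr(C)\tr(D)+y^C\tr(D)+y^D\tr(C)=0$ for disjoint $C,D\in\set{0,1}^m$, and then obtains (a), (b), (c) by short explicit manipulations with this identity; for instance, (a) follows by multiplying $\tr(B)=y_j\tr(B')+x_j\tr(B')+x_jy^{B'}$ by $\tr(A)$ and $\tr(A+\Delta_j)=y_j\tr(A)+x_j\tr(A)+x_jy^A$ by $\tr(B')$, adding so that the $y_j\tr(A)\tr(B')$ terms cancel, and applying the identity once more to $x_jy^{B'}\tr(A)+x_jy^A\tr(B')$. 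That is the level of detail a proof of this statement requires; your submission stops exactly at the point where the work begins.
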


\begin{proof}
Suppose $C,D \in \{0,1\}^m$ and $C \cap D = \emptyset$.  Then
    $$
        (\tr(C)+y^C)(\tr(D)+y^D) = \tr(C+D) + y^{C+D}\,.
    $$
Hence
    $$
        \tr(C+D) + \tr(C)\tr(D) + y^C\tr(D) + y^D\tr(C) = 0\ .
    $$
We will use this identity repeatedly in the proof.

 \begin{enumerate}
  \item[(a)] Consider the two relations $\tr(B) = y_j\tr(B') + x_j\tr(B') + x_j y^{B'}$ and $\tr(A + \Delta_j) = y_j\tr(A) + x_j\tr(A) + x_j y^{A}$.  Multiplying the first by $\tr(A)$ and the second by $\tr(B')$ and then adding the results yields
\begin{align*}
     \tr(A)\tr(B) + \tr(A + \Delta_j)\tr(B') &= x_j y^{B'}\tr(A) +  x_j y^A \tr(B')\\
     & = x_j \tr(A+B') + x_b\tr(A)\tr(B')
  \end{align*}

 \item[(b)] Multiplying the relation $\tr(B) = y_i\tr(B') + x_i \tr(B') + x_i y^{B'}$ by $\tr(A)$ we have
  \begin{align*}
     \tr(A)&\tr(B)  = y_i\tr(A)\tr(B') + x_i \tr(A)\tr(B') + x_i y^{B'}\tr(A)\\
     &= y_i\tr(A)\tr(B') + x_i \tr(A)\tr(B') \\
     &\quad + x_i y^{B'}\left(\tr(A^\prime + \Delta_i)\tr(B') + y_i y^{A^\prime} \tr(B') + \tr(A^\prime + \Delta_i)y^{B'}\right)\\
     &= y_i\tr(A)\tr(B')  + x_i \tr(A)\tr(B') + x_i \tr(y_i y^J)\Norm^{B'} \\
     &\qquad + x_i y_i y^J y^{B'}\tr(B')\\
     &= \left(y_i\tr(A') + x_i\tr(A') + x_i y^Jy^{B'}\right)\tr(B')  + x_i \tr(A)\tr(B')\\
      &\quad   + x_i \tr(y_i y^J)\Norm^{B'} + x_i y_i y^J y^{B'}\tr(B')\\
     &= {\Norm}_i \tr(A')\tr(B') + x_i \tr(A)\tr(B')
           + x_i \tr(y_i y^J){\Norm}^{B'}
  \end{align*}

 \item[(c)]
  \begin{align*}
     \tr(J)&\tr(K){\Norm}^I  = \tr(J)\tr(K)(y^I\tr(I) + y^{2I})\\
                               & = (y^I\tr(J))(\tr(I)\tr(K) + y^{2I}\tr(K))\\
                               & = (\tr(J+I) + \tr(J)\tr(I)+y^J\tr(I))(\tr(I+K)\\
                               &\qquad +y^K\tr(I)+y^{2I}\tr(K))\\
                               & = \tr(J+I)\tr(I+K) + \tr(I)\left(\tr(J)\tr(I+K) + y^J\tr(I+K) \right)\\
                                &\quad + \tr(I)\left( y^K(\tr(J+I)+\tr(J)\tr(I))+y^J\tr(I)\right)\\
                                & = \tr(A)\tr(B) + \tr(I)(\tr(J)\tr(I+K) \\
                                &\qquad\qquad\qquad + y^J\tr(I+K) + y^Iy^K\tr(J))\\
                                & = \tr(A)\tr(B) + \tr(I)\tr(J+I+K)
  \end{align*}
 \end{enumerate}
 \end{proof}

If either $|A| =1 $ or $|B|=1$ then the relations of type III(a) and type III(b) are valid but are either vacuous or are consequences of type~I relations or are other type~III relations.  The conditions $|A \cap B| \geq 1$ and $A \not \leq B$ and $B \not \leq A$ force $|A| \geq 2$ and $|B|\geq 2$.

\begin{theorem}\label{all_relns_from_I_III}
We form a set of relations by choosing, subject to the following 2 restrictions, one relation of type~III for every pair $A, B \in \set{0,1}^m$ with $|A|, |B| \ge 2$.   If $A$ and $B$ are disjoint, we interchange $A$ and $B$ if necessary to ensure that $|A| \ge |B|$ and then choose a relation of type~III~(a).  If $|A \cap B| \ge 1$ and $ A \subset B$ then we interchange $A$ and $B$ and then choose a relation of type~III~(b).  Any set formed by so choosing one relation for each pair with $|A|, |B| \ge 2$ minimally generates the quadratic relations.   In particular, such a set together with the  Type~I relations minimally generates $\ker \pi$.
\end{theorem}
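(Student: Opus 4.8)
The plan is to deduce this theorem from Theorem~\ref{all_relns_from_I_II} rather than to repeat the lead-term analysis. Write $\langle\mathrm{I}\rangle$, $\langle\mathrm{II}\rangle$ and $\langle\mathrm{III}\rangle$ for the ideals of $Q$ generated by the type~I, the type~II, and the chosen type~III relations respectively. Since every type~III relation lies in $\ker(\pi)$ we certainly have $\langle\mathrm{I},\mathrm{III}\rangle\subseteq\ker(\pi)$, while Theorem~\ref{all_relns_from_I_II} gives $\langle\mathrm{I},\mathrm{II}\rangle=\ker(\pi)$. Thus for generation it suffices to show that each type~II relation lies in $\langle\mathrm{I},\mathrm{III}\rangle$. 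The mechanism is that, unlike a type~II relation (which rewrites $\Tr(A)\Tr(B)$ purely $R$-linearly in the traces), each type~III relation rewrites $\Tr(A)\Tr(B)$ as a sum of \emph{other} products $\Tr(A')\Tr(B')$ together with terms already $R$-linear in a single trace; the essential point is that the products $\Tr(A')\Tr(B')$ occurring on the right are strictly simpler.

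To make ``simpler'' precise I would set $\nu(A,B)=\min(|A|,|B|)$ and verify that in each of the three type~III relations every product on the right has strictly smaller $\nu$ than $\Tr(A)\Tr(B)$. In III(a) the products are $\Tr(A+\Delta_j)\Tr(B')$ and $\Tr(A)\Tr(B')$, both of $\nu$-value $|B|-1$ because $|A|\ge|B|$; in III(b), taken with $B\le A$ (which includes the squares $A=B$), the products $\Tr(A)\Tr(B')$ and $\Tr(A')\Tr(B')$ again have $\nu=|B|-1$; and in III(c) one computes $\nu(I+J+K,\,I)=|I|$ and $\nu(J,K)=\min(|A|,|B|)-|I|$, both strictly less than $\min(|A|,|B|)$ since $I$, $J$, $K$ are all nonempty. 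A pair with $\nu\ge 2$ falls into exactly one of the three cases (disjoint, containment, proper overlap), so the chosen oriented relation applies; and when $\nu=1$ one factor is a singleton $\Tr(\Delta_k)=x_k\in R$, so the product is already $R$-linear in a single trace. Hence by Noetherian induction on $\nu$ every product $\Tr(A)\Tr(B)$ is congruent modulo $\langle\mathrm{III}\rangle$ to an expression that is at most linear in the traces.

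Given the type~II relation $\Tr(A)\Tr(B)+\rho_{A,B}$ with $\rho_{A,B}$ already $R$-linear, I subtract the element $\theta\in\langle\mathrm{III}\rangle$ produced by the reduction above; then $\Tr(A)\Tr(B)+\rho_{A,B}-\theta$ lies in $\ker(\pi)$ and is linear in the traces, so by the linear part of Theorem~\ref{all_relns_from_I_II} (the type~I relations generate all linear relations as an $R$-module) it lies in $\langle\mathrm{I}\rangle$. Therefore the type~II relation equals $\theta$ plus an element of $\langle\mathrm{I}\rangle$, hence lies in $\langle\mathrm{I},\mathrm{III}\rangle$, and we conclude $\langle\mathrm{I},\mathrm{III}\rangle=\ker(\pi)$.

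For minimality I would invoke graded Nakayama together with the count already fixed by Theorem~\ref{all_relns_from_I_II}. Each chosen type~III relation is homogeneous of degree $|A|+|B|$, and the chosen relations, one per unordered pair of traces with squares included via the $B=A$ case of III(b), together with the type~I relations constitute $2^m-\binom{m}{2}-m-1+\binom{2^m-m}{2}$ distinct homogeneous elements of $\ker(\pi)$. This is exactly the size of the minimal generating set of Theorem~\ref{all_relns_from_I_II}, so it equals $\sum_d\mu_d$, where $\mu_d$ denotes the minimal number of degree-$d$ generators of $\ker(\pi)$. Since any homogeneous generating set has at least $\mu_d$ members in each degree $d$, and our set generates with total size $\sum_d\mu_d$, it must meet the bound in every degree and so is minimal. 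I expect the main obstacle to be the bookkeeping behind the $\nu$-decrease in case III(c) and the verification that the three cases exhaust all pairs with an applicable oriented relation; once these are in hand the remainder follows formally from Theorem~\ref{all_relns_from_I_II}.
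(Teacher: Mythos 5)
Your proposal is correct, and its engine coincides with the paper's: both proofs rest on showing that the chosen type~III relations rewrite every product $\Tr(A)\Tr(B)$ with $|A|,|B|\ge 2$ as an $R$-linear combination of traces, and then appeal to the fact (from the proof of Theorem~\ref{all_relns_from_I_II}) that the type~I relations generate all $R$-linear relations. Still, you differ in three ways worth recording. First, your induction measure $\nu(A,B)=\min(|A|,|B|)$ is cleaner than the paper's: the paper inducts on the grading of $Q$ in which $\deg(x_i)=\deg(N_i)=0$, under which the term $\Tr(A+\Delta_j)\Tr(B')$ in case III(a) has the \emph{same} degree as $\Tr(A)\Tr(B)$, forcing a secondary iteration on $|B'|$; your $\nu$ strictly drops on every product in all three cases (your computations, including $\nu=|I|$ and $\nu=\min(|A|,|B|)-|I|$ in case III(c), are right), so a single strong induction suffices. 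Second, you phrase generation as membership of each type~II relation in $\langle\mathrm{I},\mathrm{III}\rangle$ and cite Theorem~\ref{all_relns_from_I_II}, where the paper invokes the Derksen--Kemper framework directly; these are logically equivalent routes to the same conclusion. Third, and most substantively, your graded-Nakayama-plus-counting argument is a genuine addition: the paper's proof establishes generation only and never addresses the claimed minimality, whereas you observe that the chosen set is homogeneous and matches the cardinality (indeed the degree-by-degree count: one relation of degree $|A|+|B|$ per unordered pair, squares included via III(b)) of the minimal generating set of Theorem~\ref{all_relns_from_I_II}, hence realizes the minimal number $\mu_d$ of generators in every degree. Two small points should be made explicit in a final write-up: the convention $\Tr(\Delta_k)=x_k\in R$, which is needed for the type~III relations and for your $\nu=1$ base case to be literal elements of $Q$, and the (routine) verification that the chosen relations are pairwise distinct, which your cardinality count tacitly uses.
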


\begin{proof}
  We show that a set of relations chosen as described, allows us to reduce every product $\tr(A)\tr(B)$ with $|A|, |B| \ge 2$
  to a sum of $R$-linear combinations of the secondary generators $\tr(A)$.

We change the natural grading on $Q$ by setting $\deg(x_i) = \deg(N_i) = 0$ for all $i$, and induct on the resulting degree.
 Suppose we are given $A,B \in \set{0,1}^m$ with $|A|, |B| \ge 2$.

If $B \le A$, then the corresponding relation of type~III~(b) allows us to rewrite $\tr(A)\tr(B)$ as the sum of three terms, each of which is of smaller degree.

If $|A \cap B| \ge 1$ and $A \not\le B$ and $B \not\le A$, then the relation of type~III~(c) rewrites $\tr(A)\tr(B)$ as a sum of two terms, one of which has smaller degree and the other term can be rewritten using a relation of type~III~(b).

If $|A \cap  B| = 0$, then the relation of type~III~(a) produces three terms, two of which have lower degree.  The other term has a factor $\tr(B^\prime)$ with $|B^\prime| < |B|$. We may, therefore, repeatedly apply a relation of type~III~(a) until $|B^\prime| = 1$ at which point $\deg(\tr(B^\prime)) = 0$, finishing the proof.
\end{proof}

\bibliographystyle{amsplain}
\bibliography{C:/LocalTexFiles/bibtex/bib/listkey_master_2012}

\end{document}